\documentclass[letterpaper, 10 pt, conference]{ieeeconf}
\IEEEoverridecommandlockouts

\usepackage{amssymb,amsmath}

\usepackage{amsthm}
\usepackage{mathtools}
\usepackage{multirow}
\usepackage{graphicx}
\usepackage{comment}
\usepackage[sort,compress,noadjust]{cite}
\usepackage[font=footnotesize]{subcaption}
\usepackage[font=footnotesize]{caption}

\usepackage{amsfonts}
 \usepackage{tabularx}
\usepackage{graphicx}
\usepackage{array}
\usepackage{float}
\usepackage{hhline}
\usepackage{algorithmic}
\usepackage[linesnumbered, ruled, vlined]{algorithm2e}
\usepackage{setspace}
\usepackage{colortbl}
\usepackage{arydshln}
\SetCommentSty{mycommfont}

\DeclareMathOperator*{\argmin}{arg\,min}

\usepackage{accents}

\usepackage{placeins}

\theoremstyle{plain}
\newtheorem{theorem}{Theorem}
\theoremstyle{plain}
\newtheorem{proposition}{Proposition}
\theoremstyle{plain}

\theoremstyle{plain}

\theoremstyle{definition}

\newtheorem{remark}{Remark}

\newtheoremstyle{definition}{}{}{}{}{\bfseries}{.}{.5em}{\thmname{#1}\thmnumber{ #2}\thmnote{ (#3)}}
\theoremstyle{definition}
\newtheorem{definition}{Definition}

\newcommand{\naturals}{\mathbb{N}}
\newcommand{\real}{\mathbb{R}}

\newcommand{\until}[1]{[#1]}
\newcommand{\map}[3]{#1:#2 \rightarrow #3}

\newcommand{\setdefb}[2]{\big\{#1 \; | \; #2\big\}}

\renewcommand{\bf}{\mathbf{f}} 
\newcommand{\bg}{\mathbf{g}}

\newcommand{\bk}{\mathbf{k}}

\newcommand{\bu}{\mathbf{u}}
\newcommand{\bv}{\mathbf{v}}

\newcommand{\bx}{\mathbf{x}}
\newcommand{\by}{\mathbf{y}}

\newcommand{\bA}{\mathbf{A}}

\newcommand{\bH}{\mathbf{H}}
\newcommand{\bI}{\mathbf{I}}

\newcommand{\bL}{\mathbf{L}}

\newcommand{\bP}{\mathbf{P}}

\newcommand{\bV}{\mathbf{V}}

\newcommand{\balpha}{\boldsymbol{\alpha}}

\newcommand{\bzero}{\mathbf{0}}
\newcommand{\bPsi}{\boldsymbol{\Psi}}
\newcommand{\bxi}{\boldsymbol{\xi}}

\newcommand{\Cc}{\mathcal{C}}

\newcommand{\Ec}{\mathcal{E}}

\newcommand{\Kc}{\mathcal{K}}

\newcommand{\Sc}{\mathcal{S}}
\newcommand{\Vc}{\mathcal{V}}

\newcommand{\Wc}{\mathcal{W}}

\usepackage[colorlinks,urlcolor=blue]{hyperref}
\usepackage[capitalize]{cleveref}
\crefformat{equation}{(#2#1#3)}
\Crefformat{equation}{Equation~(#2#1#3)}
\Crefname{equation}{Equation}{Eqs.}

\title{\LARGE \textbf{High-Order Matrix Control Barrier Functions:\\ Well-Posedness and Feasibility via Matrix Relative Degree}}

\author{Samuel G. Gessow$^1$, Pio Ong$^2$, Aaron D. Ames$^2$, and Brett T. Lopez$^1$
\thanks{$^1$Authors are with VECTR Laboratory, University of California, Los Angeles, Los Angeles, CA, USA, {\tt\footnotesize \{sgessow, btlopez\}@ucla.edu}}
\thanks{$^2$Authors are with the Department of Mechanical and Civil Engineering, California Institute of Technology, Pasadena, CA 91125, USA, {\tt\footnotesize \{pioong, ames\}@caltech.edu}}
}

\begin{document}
\maketitle
\thispagestyle{empty}
\begin{abstract}
    Control barrier functions (CBFs) provide an effective framework for enforcing safety in dynamical systems with scalar constraints. However, many safety constraints are more naturally expressed as matrix-valued conditions, such as positive definiteness or eigenvalue bounds --- scalar formulations introduce potential nonsmoothness that complicates analysis. \emph{Matrix control barrier functions (MCBFs)} address this limitation by directly enforcing matrix-valued safety constraints. Yet for constraints where the control input does not appear in the first derivative, high-order formulations are required. While such extensions are well understood in the scalar case, they remain largely unexplored in the matrix case. This paper develops \emph{high-order matrix control barrier functions (HOMCBFs)} and establishes conditions ensuring well-posedness and feasibility of the associated constraints, enabling enforcement of matrix-valued safety constraints for systems with high-order dynamics. We further show that, using an optimal-decay HOMCBF formulation, forward invariance can be ensured while requiring control only over the minimum eigenspace. The framework is demonstrated on a localization safety problem by enforcing positive definiteness of the information matrix for a double integrator system with a nonlinear measurement model.
\end{abstract}
\section{Introduction}
Control barrier functions (CBFs) provide an effective framework for enforcing safety constraints on dynamic systems \cite{ames2016control,ames2019control}, with applications that include autonomous driving, mobile robotics, aerial systems, among others \cite{hsu2023safety}.
In many cases, safety specifications can naturally be expressed as the zero-super level set of a scalar function, for which necessary and sufficient conditions for forward invariance are well understood.
However, there is a growing class of applications in which safety is more naturally described by a matrix-valued function \cite{PO-BC-LS-JC:23-auto, gessow2024information,ong2025matrix}. 
A common approach in such settings is to reduce matrix-valued conditions to scalar constraints on their eigenvalues~\cite{PO-BC-LS-JC:23-auto, gessow2024information}.
While this enables the use of standard scalar CBF tools, it introduces significant technical challenges, as eigenvalues are inherently nonsmooth. Within this scalarization framework, there are two main approaches: one directly handles the nonsmoothness using nonsmooth barrier functions~\cite{glotfelter2017nonsmooth} as in~\cite{PO-BC-LS-JC:23-auto} while the other avoids nonsmooth points by imposing additional conditions, such as eigenvalue separation~\cite{gessow2024information}.  
Given these complications, a more natural framework that avoids scalarization is of interest.

Recently, matrix control barrier functions (MCBFs) have been proposed to handle matrix-valued safety constraints~\cite{ong2025matrix}, such as positive definiteness and eigenvalue bounds.
Critically, MCBFs avoid differentiating the eigenvalues of matrix-valued functions, thereby circumventing difficulties from nonsmoothness and improving computation times.
By avoiding the additional conditions required by nonsmoothness, MCBFs also reduce the conservatism of scalar eigenvalue-based formulations, which can shrink the admissible safe set.

Despite these advantages, the theoretical foundations of MCBFs remain incomplete. One central issue is \emph{well-posedness}, namely whether a given matrix-valued constraint can be enforced through Lie-derivative-based inequalities.
This property is fundamentally tied to the geometry of the boundary of the safe set. In the scalar setting, well-posedness admits a simple and complete characterization: the constraint function must have zero as a regular value~\cite{ames2019control} (also related to practical set in~\cite{blanchini2008set}). This condition is a key assumption for many results in the CBF literature.
Complementary to this is the issue of \emph{feasibility}. Even when a set is well-posed, enforcing the associated barrier condition requires verifying a linear matrix inequality (LMI), which is difficult to check.

Beyond well-posedness and feasibility, a key missing component in the theory of MCBFs is the treatment of high relative degree constraints. In many systems, safety constraints are not directly influenced by the control input, but only through high-order system dynamics.
In the scalar setting, this challenge is well understood, with techniques such as high-order CBF (HOCBF)~\cite{xiao2019control} followed by optimal decay CBFs~\cite{gurriet2020scalable,zeng2021safety,ong2025properties} which reduce well-posedness to only a condition on the boundary of the safe set.
In contrast, no analogous framework exists for matrix-valued constraints. 

In this work, we develop a framework for matrix-valued constraints in high-order dynamics settings. 
A key challenge in doing so is ensuring that the safe set from a matrix safety function is enforceable.
To address this, we develop a matrix analogue of relative degree, enabling the  of high-order MCBFs. This notion is derived from an analysis of well-posedness of sets defined by matrix-valued constraints, ensuring that the resulting condition are meaningful.
In particular, the proposed conditions are based on the ability to influence the minimum eigenvalue of the matrix-valued function, making them closely aligned with the conditions from the scalar case. We further develop a tractable sufficient condition for matrix relative degree, facilitating its verification. Finally, we show that by leveraging the optimal-decay CBF framework, forward invariance can be ensured without requiring explicit control over non-minimal eigenvalues. 
We demonstrate HOMCBFs on the problem of ensuring localization safety for a double integrator system.

\label{sec:introduction}
\section{Preliminaries}
\subsection{Notation}
The set of real numbers is denoted $\real$ with $\real^n$ denoting the $n$-dimensional Euclidean space and $\real_{\geq0}$ the set of positive scalars.
The set of real symmetric $p\times p$ matrices is denoted $\mathbb{S}^p$, and the cone of positive definite symmetric matrices is denoted $\mathbb{S}^p_+$.
We say a continuous function $\alpha:\real\to\real$ is an extended class-$\mathcal{K}$ function $(\alpha \in \mathcal{K}^e)$ if it is strictly monotonically increasing and satisfies $\alpha(0)=0$.
The eigenvalues of matrix $\bA$ are $\lambda(\bA)$ and the smallest eigenvalue is $\lambda_{\mathrm{min}} (\bA) = \min \, \lambda(\bA)$. 
The Lie derivative of a scalar function $\bx \mapsto h(\bx)$ along a vector field $\bx \mapsto \bf(\bx)$ is denoted $L_\bf h(\bx)= \nabla h( \bx)^\top \bf(\bx)$. Similarly, the Lie derivative of a matrix-valued function $\bx \mapsto \bH(\bx)\in\mathbb{S}^p$ along a vector field $\bx \mapsto \bf(\bx)$ is denoted $\bL_\bf \bH(\bx)$, and it is equivalent to a matrix of element-wise Lie derivative, i.e., $[\bL_\bf \bH(x)]_{ij}=\bL_f(H_{ij})(x)$, $i,j\in\{1,\dots,p\}$.

\subsection{Control Barrier Functions}
Control barrier functions (CBFs) provide a framework for enforcing state constraints in nonlinear systems through feedback control. Consider the control-affine system:
\begin{equation}\label{sys:ctrl_affine}
    \dot \bx = \bf(\bx) + \bg(\bx) \bu
\end{equation}
where $\bx \in \real^n$ is the state and $\bu\in\real^m$ is the control input. We assume in this paper that the drift vector field $\map{\bf}{\real^n}{\real^n}$ and the control matrix $\map{\bg}{\real^n}{\real^{n\times m}}$ are smooth.  

A safety specification is encoded through a set $\Sc \subseteq \real^n$. The control objective is to ensure that system trajectories $t\mapsto\bx(t)$ remain in $\Sc$ for all time $t\geq 0$. This requirement is equivalent to rendering $\Sc$ \textit{forward invariant} under the closed-loop dynamics. Achieving this requires the set $\Sc$ to be \textit{control invariant}, i.e., for every initial condition $\bx_0\in\Sc$, there exists a control input $t\mapsto \bu(t)$  such that the ensuing trajectory is contained within $\Sc$. Since safety constraints are typically prescribed by the task and may not naturally be control invariant, it is common to instead construct a subset $\Cc\subseteq \Sc$ that is control invariant. To this end, CBFs provide a tool for certifying control invariance of such sets.

\begin{definition}
Let $\Sc \subseteq \real^n$ be a safety constraint set. A set $\Cc$ is said to be \emph{safe} for \cref{sys:ctrl_affine} if $\Cc\subseteq \Sc$ and $\Cc$ is control invariant.
\end{definition}

In the CBF framework, the set $\Cc$ is typically described as the $0$-superlevel set of a scalar-valued function 
$\map{h}{\real^n}{\real}$:
\begin{equation}
    \Cc = \setdefb{\bx\in\real^n}{h(\bx)\geq 0}.
\end{equation}
To ensure forward invariance of $\Cc$, the function $h$ must remain nonnegative at all time $t\geq 0$. This leads to the notion of a control barrier function.

\begin{definition}[cf.~\cite{ames2016control}]\label{def:CBF}
Let $\Cc \subset \mathbb{R}^n$ be the $0$-superlevel set of a continuously differentiable function 
$h : \mathbb{R}^n \rightarrow \mathbb{R}$. 
The function $h$ is a {\emph{control barrier function}} for \cref{sys:ctrl_affine} if there exists an extended class-$\mathcal{K}^e$ function $\alpha$ such that for each $\bx \in \Cc$ there exists a $\bu \in \real^m$ such that:
\begin{equation}
\label{eq:CBF_cond}
     \dot h(\bx,\bu) \triangleq L_\bf h(\bx) +  L_{\bg} h(\bx) \bu > - \alpha(h(\bx)).
\end{equation}
\end{definition}

Although the CBF condition in \cref{def:CBF} only directly guarantees the pointwise existence of a control input, the condition actually implies the existence of a continuous feedback controller $\map{\bk}{\real^n}{\real^m}$ that renders the differential inequality $\dot h(\bx,\bk(\bx)) \geq -\alpha(h(\bx))$. In particular, a widely used controller is the CBF-based quadratic program (CBF-QP) which selects the control input closest to a given nominal controller $\bx\mapsto\bk_{\rm nom}(\bx)$ as:
\begin{equation}\label{eq:cbf_qp}
\begin{aligned}
\bk(\bx) = \argmin_{\bu\in\real^m} \quad & \|\bu - \bk_{\rm nom}(\bx)\|^2 \\
\text{s.t.} \quad 
& L_\bf h(\bx) +  L_{\bg} h(\bx) \bu \ge -\alpha(h(\bx)).
\end{aligned}
\end{equation}
Under the CBF condition, the strict inequality in \cref{def:CBF} ensures that the CBF-QP is continuous. Hence, the feedback $\bu=\bk(\bx)$ renders $\dot h \geq -\alpha(h)$ along the closed-loop trajectories, ensuring via the comparison lemma that $h$ remains nonnegative so that $\Cc$ is forward invariant.

\subsection{High-Order CBF}
While the CBF framework above provides a convenient condition for enforcing safety constraints, it requires that the control input appear in the derivative of the barrier function~$h$. In particular, the CBF constraint depends on the term $L_\bg h(\bx)$, and for some systems, this term may vanish identically, i.e., $L_\bg h(\bx)\equiv \bzero$. Such situations arise, for example, in strict-feedback systems or when the constraint function has a \textit{relative degree} greater than one.

In this case, the CBF condition reduces to:
\begin{equation}
    L_\bf h(\bx) + \alpha(h(\bx))\geq 0.
\end{equation}
Due to the absence of the control input, the satisfaction of the barrier condition above depends entirely on the natural system dynamics. Nevertheless, the independence on $\bu$ allows a new construction of a barrier function candidate:
$$
\psi_2(\bx) = L_\bf h(\bx) + \alpha(h(\bx)),
$$
which we aim to keep nonnegative. If the control input appears in the derivative of $\psi_2$, then the barrier condition can be enforced at this level.

More generally, define the sequence of functions recursively as:
\begin{equation}\label{eq:HOCBF}
\begin{aligned}
    \psi_1(\bx) &= h(\bx),\\
    \psi_{i+1}(\bx) &= L_\bf \psi_{i}(\bx) +\alpha_{i}(\psi_{i}(\bx)),~ i\in\until{r-1}
\end{aligned}
\end{equation}
where $\{\alpha_i\}_{i=0}^{r-1}$ are smooth extended class-$\mathcal K^e$ functions. The integer $r\in\naturals$ denotes the first level where the derivative of $\psi_r$ depends on $\bu$, i.e., $L_\bg \psi_r (\bx) \not \equiv \bzero$.

\begin{definition}[cf.~\cite{xiao2021high}]
\label{def:HOCBF}
Given a smooth function $\map{h}{\real^n}{\real}$, let the functions $\{\psi_i\}_{i\in\until{r}}$ be defined recursively as in~\eqref{eq:HOCBF}. For each $i\in\until{r}$, define the set $\Cc_i$ as the 0-superlevel set corresponding to $\psi_i$ as:
\[
\Cc_i = \{\bx \in \mathbb{R}^n : \psi_i(\bx) \ge 0\}.
\]
The function $h$ is a \textit{high-order control barrier function} (HOCBF) for~\eqref{sys:ctrl_affine} if there exists an extended class-$\Kc^e$ function $\alpha_r$ such that for each $\bx\in\cap_{i\in\until{r}}\Cc_i$ there exists a $\bu \in \real^m$ such that:
\begin{equation}
\label{eq:HOCBF_cond}
     \dot \psi_r(\bx,\bu) \triangleq L_\bf \psi_r(\bx) +  L_{\bg} \psi_r(\bx) \bu > - \alpha_r(\psi_r(\bx)).
\end{equation}
\end{definition}
Similar to CBFs, HOCBFs can be used to enforce safety constraints by designing a continuous controller $\map{\bk}{\real^n}{\real^m}$ that satisfies $\dot \psi_r(\bx,\bk(\bx))\geq -\alpha_r(\psi_r(\bx))$. In practice, this can be implemented using a CBF-QP similar to~\eqref{eq:cbf_qp}, but with the HOCBF constraint replacing the CBF constraint. Note, however, due to the recursive construction of the functions $\{\psi_i\}$ required to ensure that the control input can influence the barrier condition, forward invariance is only ensured on the intersection $\cap_{i\in\until r}\Cc_i$. The HOCBF condition enforces positivity of the lowest-level function $\psi_r$, and the recursive relations in~\eqref{eq:HOCBF} only ensure that this positivity propagates upward to each $\psi_i$ (and ultimately $h$) if each $\psi_i$ is initially nonnegative. 

The HOCBF construction is closely related to the notion of relative degree from feedback linearization. 
\begin{definition}
A smooth function $h: \mathbb{R}^n \rightarrow \mathbb{R}$ is said to have relative degree $r\in \mathbb{N}$ with respect to the input $\bu$ for \cref{sys:ctrl_affine} at $\bx\in \mathbb{R}^n$ if:
\begin{enumerate}
    \item $L_\bg L_\bf^k h(\bx) = 0, \quad \forall k \in \{0, \ldots, r-2\},$
    \item $L_\bg L_\bf^{r-1} h(\bx) \neq 0.$
\end{enumerate}
and $h$ is said to have relative degree $r$ on a set $\mathcal{E}\subseteq \mathbb{R}^n$ if it has relative degree $r$ with respect to ~$\bu$ for all $\bx\in \mathcal{E}$.
\label{def:rel_deg}
\end{definition}
From the definition, if the function $h$ has relative degree $r$ on the intersection $\cap_{i\in\until r} \Cc_i$, then $L_\bg \psi_r(\bx)= L_\bg L_\bf^{r-1} h(\bx) \neq 0$, ensuring that $h$ meets the conditions in~\cref{def:HOCBF} to be a HOCBF. This relative-degree assumption is the primary condition adopted in the original HOCBF formulation in~\cite{xiao2021high} as a way to verify condition~\cref{eq:HOCBF_cond}.

\subsection{Matrix Control Barrier Function}
Matrix control barrier functions (MCBFs) generalize the scalar CBF framework by replacing the scalar barrier function $\map{h}{\real^n}{\real}$ with a symmetric matrix-valued function $\map{\bH}{\real^n}{\mathbb{S}^p}$. In this setting, the safe set of interest is better represented with a positive semidefinite constraint:
\begin{equation}
\Cc = \setdefb{\bx\in\real^n}{\bH(\bx)\succeq \bzero}.
\label{eq:safe_set}
\end{equation}
Aligned with this safe set definition, MCBFs are developed using the ordering induced by the positive semidefinite cone.

\begin{definition}[cf.~\cite{ong2025matrix}]
Let $\Cc \subset \mathbb{R}^n$ be defined by a continuously differentiable function $\map{\bH}{\real^n}{\mathbb{S}^p}$ as in~\eqref{eq:safe_set}. The function $\bH$ is a \emph{matrix control barrier function} (MCBF) for \cref{sys:ctrl_affine} if there exists an extended class-$\mathcal{K}^e$ function $\alpha$ such that, for each $\bx\in\Cc$, there exists a $\bu\in \mathbb{R}^m$ satisfying:
\begin{equation}
    \dot{\bH}(\bx,\bu) \triangleq\bL_\bf\bH(\bx)+\sum_{j=1}^m \bL_{\bg_j}\bH(\bx) u_j \succ -\balpha(\bH(\bx))
\end{equation}
with $\balpha(\cdot)$ defined spectrally (see \cite[Eq. (23)]{ong2025matrix}), and $\bg_j$ is a column of $\bg$. 
\label{def:MCBF}
\end{definition}
Similar to CBFs, MCBFs ensures the possibility of designing a continuous controller $\map{\bk}{\real^n}{\real^m}$ that satisfies $\dot \bH(\bx,\bk(\bx))\succeq -\balpha(\bH(\bx))$, which guarantees $\bH$ remains positive semidefinite along system trajectories~\cite{ong2025matrix}. In practice, such controllers can be constructed using optimization-based safety filters analogous to the CBF-QP~\cref{eq:cbf_qp}. In particular, replacing the scalar barrier constraint with the matrix inequality yields a semidefinte program (CBF-SDP), which is continuous under the strict inequality of the barrier condition of~\cref{def:MCBF}.
A useful specific version of the MCBF is the exponential MCBF where $\alpha$ is chosen to be linear, i.e., $\alpha(r) = c_\alpha r$ with $c_\alpha>0$, in which case $\balpha(\bH) = c_\alpha \bH$ and no spectral decomposition of $\bH$ is required.

Despite the similarities with the scalar case, the existing MCBF framework does not address safety constraints whose matrix barrier function has \textit{relative degree} higher than one. The development of such high-order MCBFs (HOMCBFs), together with a notion of relative degree for matrix-valued functions, is of interest in this paper.

\section{High-Order Matrix CBFs}
While MCBFs provide a natural framework for enforcing matrix-valued safety constraints, their applicability is limited to cases where the control input appears in the derivative of the matrix function $\bH$. In particular, when $\bL_{\bg_i}\bH(\bx)\equiv \bzero$ for all $i\in\until{m}$, the control input does not directly influence the barrier condition, and high-order constructions are required. In this section, we therefore develop high-order matrix control barrier functions (HOMCBFs).

Analogous to the scalar case, we define the sequence of functions recursively as:
\begin{align}\label{eq:HOMCBF}
    \bPsi_1(\bx)&= \bH(\bx),\\
    \bPsi_{i+1}(\bx) &= \bL_\bf\bPsi_i(\bx) + \balpha_i(\bPsi_i(\bx)),~i\in\until{r-1}
\end{align}
with extended class-$\mathcal{K}^e$ functions $\{\balpha_i\}_{i=1}^{r-1}$, and with $r\in\naturals$ being the first level where there exists $j\in\until m$ such that $\bL_{\bg_j}\bPsi_r\not \equiv \bzero$. We now formally define a HOMCBF.

\begin{definition}\label{def:HOMCBF}
    Given a smooth function $\map{\bH}{\real^n}{\mathbb S^p}$, let the functions $\{\bPsi_i\}_{i\in\until{r}}$ be defined recursively as in \cref{eq:HOMCBF}. For each $i\in\until r$, define the set $\Cc_i$ corresponding to $\bPsi_i$ as:
    \begin{equation}
        \label{eq:HO_safe_set}
        \Cc_i = \setdefb{\bx\in\real^n}{\bPsi_i(\bx)\succeq 0}.
    \end{equation}
    The function $\bH$ is a \emph{high-order matrix control barrier function} for system~\eqref{sys:ctrl_affine} if there exists an extended class-$\Kc^e$ function $\balpha_r$ such that, for each $\bx\in\cap_{i\in\until r} \Cc_i$, there exists a $\bu\in\real^m$ satisfying:
    \begin{equation}\label{eq:HOMCBF_cond_strict}
    \dot{\bPsi}_r(\bx,\bu) \triangleq \bL_\bf\bPsi_r(\bx)+\sum_{j=1}^m \bL_{\bg_j}\bPsi_r(\bx) u_j \succ -\balpha_r(\bPsi_r(\bx))
    \end{equation}
\end{definition}

Similar to the scalar case, the HOMCBF condition facilitates the design of a feedback controller to render the set intersection~$\cap_{i\in\until r}\Cc_i$ forward invariant.
\begin{theorem}
\label{thm:HOMCBF}
    Consider the control-affine system~\eqref{sys:ctrl_affine} with sets $\{\Cc_i\}_{i\in\until r}$ defined as in~\eqref{eq:HO_safe_set}. If $\bH$ is a HOMCBF for~\eqref{sys:ctrl_affine}, then any continuous feedback controller $\map{\bk}{\real^n}{\real^m}$ satisfying: 
    \begin{equation}\label{eq:HOMCBF_cond}
    \dot\bPsi_r(\bx,\bk(\bx))\succeq -\balpha_r(\bPsi_r(\bx)),
    \end{equation}
    for all $\bx$ in an open neighborhood of $\cap_{i\in\until r}\Cc_i$, renders the intersection $\cap_{i\in\until r} \Cc_i$ forward invariant for the closed-loop system under the state feedback $\bu=\bk(\bx)$.
    
    In particular, the CBF-SDP controller given by:
    \begin{align}
        \label{eq:HOMCBF-SDP}
        \bk(\bx) = \argmin_{\bu\in\real^m} \quad &\|\bu-\bk_{\rm nom}(\bx)\|^2 \\
                  \textup{s.t.} \quad &\dot \bPsi_r(\bx,\bu) \succeq -\balpha_r(\bPsi_r(\bx))\nonumber
    \end{align}
    is continuous on an open neighborhood of $\cap_{i\in\until r}\Cc_i$, and therefore renders the intersection $\cap_{i\in\until r} \Cc_i$ forward invariant for the closed-loop system.
\end{theorem}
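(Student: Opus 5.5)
The proof has two parts. First, any continuous feedback satisfying \eqref{eq:HOMCBF_cond} near $\cap_{i\in\until r}\Cc_i$ renders that intersection forward invariant. Second, the CBF-SDP \eqref{eq:HOMCBF-SDP} is one such feedback.

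For the first part, fix $\bx_0\in\cap_{i\in\until r}\Cc_i$ and let $t\mapsto\bx(t)$ be a closed-loop solution under $\bu=\bk(\bx)$. Solutions exist because the closed-loop vector field is continuous; if uniqueness is needed, I assume $\bk$ is locally Lipschitz or argue for every solution. Suppose, for contradiction, that the solution leaves the intersection, and let $T$ be the first exit time. On $[0,T+\epsilon)$ the trajectory stays in the open neighborhood where \eqref{eq:HOMCBF_cond} holds. Along the trajectory, $\bL_{\bg_j}\bPsi_i\equiv\bzero$ for $i<r$, so
\begin{equation*}
\tfrac{d}{dt}\bPsi_i(\bx(t))=\bL_\bf\bPsi_i(\bx(t))=\bPsi_{i+1}(\bx(t))-\balpha_i(\bPsi_i(\bx(t))),\quad i<r,
\end{equation*}
and $\tfrac{d}{dt}\bPsi_r\succeq-\balpha_r(\bPsi_r)$ there.

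I would then invoke the matrix comparison result underlying \cref{def:MCBF} from~\cite{ong2025matrix}: if $\dot\bPsi\succeq-\balpha(\bPsi)$ along a trajectory and $\bPsi(0)\succeq\bzero$, then $\bPsi(t)\succeq\bzero$. Applied to $\bPsi_r$, this gives $\bPsi_r\succeq\bzero$ on $[0,T+\epsilon)$. Induction downward from $i=r-1$ to $i=1$ uses $\dot\bPsi_i=\bPsi_{i+1}-\balpha_i(\bPsi_i)\succeq-\balpha_i(\bPsi_i)$, which holds because $\bPsi_{i+1}\succeq\bzero$ has already been established; the same comparison result then gives $\bPsi_i\succeq\bzero$. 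Hence the trajectory stays in the intersection past $T$, contradicting the definition of $T$.

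I expect the main obstacle to be the comparison step in the matrix setting. If I cannot cite it directly, I would prove it by looking at the minimum eigenvalue. Take a unit eigenvector $\bv$ of $\bPsi$ associated with $\lambda_{\min}(\bPsi)$. Because $\balpha$ is defined spectrally, $\bv^\top\balpha(\bPsi)\bv=\alpha(\lambda_{\min}(\bPsi))$. By Danskin's theorem, the upper Dini derivative satisfies $D^+\lambda_{\min}\ge\min_{\bv}\bv^\top\dot\bPsi\bv\ge-\alpha(\lambda_{\min})$. The scalar comparison lemma for Dini derivatives then yields $\lambda_{\min}\ge0$. A subtlety is that the comparison needs $\alpha$ to yield uniqueness, or an argument that directly shows a nonnegative $\lambda_{\min}$ cannot cross zero. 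For the latter, I would use that $\alpha(\lambda)\to0$ as $\lambda\to0$ together with a standard argument that $\lambda_{\min}\ge-\epsilon$ for every $\epsilon>0$.

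For the second part, the feasible set at each $\bx$ is the set of $\bu$ satisfying an LMI that is affine in $\bu$. By the HOMCBF condition \eqref{eq:HOMCBF_cond_strict}, this set has a strictly feasible point at each $\bx\in\cap_{i\in\until r}\Cc_i$. Strict feasibility is an open condition and the data are continuous in $\bx$, so Slater's condition holds on an open neighborhood. On that neighborhood the feasible-set map is continuous, both lower and upper hemicontinuous in the Hausdorff sense locally. Since the objective is strongly convex and continuous, Berge's maximum theorem gives a unique minimizer that depends continuously on $\bx$. This is the same argument used for the MCBF CBF-SDP in~\cite{ong2025matrix}. The first part then applies.
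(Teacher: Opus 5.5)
Your proposal is correct and follows essentially the same route as the paper: a first-exit-time contradiction that applies the matrix comparison result of \cite[Prop. 1]{ong2025matrix} to $\bPsi_r$ and then recursively down through $\bPsi_{r-1},\dots,\bPsi_1$, followed by propagating strict feasibility to an open neighborhood so that Slater's condition gives continuity of the SDP solution as in \cite[Thm. 1]{ong2025matrix}. Your extra details are consistent with this argument and only make explicit what the paper cites or leaves implicit: the use of $\bL_{\bg_j}\bPsi_i\equiv\bzero$ for $i<r$, the Danskin/Dini-derivative fallback for the comparison step (where no uniqueness is actually needed, since $D^+\lambda_{\min}>0$ wherever $\lambda_{\min}<0$), and the Berge-type continuity argument (with compactness supplied by coercivity of the objective).
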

\begin{proof}
    Denote with $\Ec\supset \cap_{i\in\until r}\Cc_i$ the open neighborhood where~\eqref{eq:HOMCBF_cond} holds. Under the state feedback $\bu=\bk(\bx)$, suppose there exists a trajectory $t\mapsto \bx(t)$ with an initial condition $\bx_0\in\cap_{i\in\until r}\Cc_i$ such that the trajectory leaves the intersection $\cap_{i\in\until r}\Cc_i$. Then because $\Ec$ is an open neighborhood of the intersection $\cap_{i\in\until r} \Cc_i$, there exists time $t^*$ such that $\bx(t)\in\Ec$ for all time $t \in [0,t^*]$ and $\bx(t^*)\in \Ec \setminus \cap_{i\in\until r}\Cc_i$.
    
    The HOMCBF condition ensures that:
    $$
    \frac{d}{dt}\bPsi_r(\bx(t)) = \dot \bPsi_r(\bx(t),\bk(\bx(t)))\succeq -\balpha_r(\bPsi_r(\bx(t))) 
    $$
    for all time $t$ such that  $\bx(t) \in \Ec$. This implies $\bPsi_r(\bx(t)) \succeq \bzero$ for time $[0,t^*]$, see also~\cite[Prop. 1]{ong2025matrix} for the matrix comparison argument. By definition~\eqref{eq:HOMCBF} of $\bPsi_r$, we deduce $\dot \bPsi_{r-1}(\bx(t)) \succeq -\balpha_{r-1}(\bPsi_{r-1}(\bx(t)))$. Again, by comparison arguments, we have $\bPsi_{r-1}(\bx(t)) \succeq \bzero$ for time $[0,t^*]$. Repeating this process recursively shows that $\bPsi_i(\bx(t))\succeq \bzero$ for all $i\in\until r$ and time $t\in[0, t^*]$. This however implies $\bx(t) \in \cap_{i\in\until{r}}\Cc_i$ for all time $t \in [0,t^*]$, which is a contradiction with $\bx(t^*)\in \Ec \setminus \cap_{i\in\until r}\Cc_i$. Therefore, the intersection $\cap_{i\in\until{r}}\Cc_i$ is forward invariant.

    It remains to prove the statement regarding the HOMCBF-SDP controller. By Definition~\ref{def:HOMCBF}, the HOMCBF-SDP is strictly feasible on $\cap_{i\in\until r}\Cc_i$. From continuity of the functions involved in the constraint, strict feasibility also holds on some neighborhood of $\cap_{i\in\until r}\Cc_i$. Since the CBF-SDP satisfies Slater's condition, it is continuous there; see \cite[Thm. 1]{ong2025matrix} for the complete proof of this statement.
\end{proof}

The above result directly parallels the scalar HOCBF framework, with the recursive construction of the functions $\{\bPsi_i\}_{i\in\until r}$ ensuring that the positivity of the highest-order function propagates to the original constraint. In particular,  the safe set verified is the intersection $\cap_{i\in\until r}\Cc_i$, which shrinks the set $\Cc_1$ associated with $\bH$. Nevertheless, if $\Cc_1 \subseteq \Sc$, then forward invariance of $\cap_{i\in\until r}\Cc_i$ guarantees safety.

\section{Matrix Relative Degree Notions}

In contrast to the scalar case, the HOMCBF condition involves a linear matrix inequality, making it significantly more challenging to verify in practice. In particular, characterizing a notion analogous to relative degree for the matrix setting that guarantees the barrier condition~\eqref{eq:HOMCBF_cond_strict} is nontrivial. To address this challenge, we develop our intuition on how control inputs influence matrix-valued CBF through studying the simplest single-integrator dynamics, in order to develop a notion of relative degree for matrix-valued constraints.

\subsection{Scalar Case Revisited}
We briefly recall the scalar case to highlight the key property underlying relative degree that enables verification of CBFs. For a scalar constraint $\map{h}{\real^n}{\real}$ with relative degree one on $\Cc$, the existence of a control direction $\bg_i$ such that $L_{\bg_i}h(\bx)\neq 0$ is sufficient to influence the evolution of $h$ and enforce the barrier condition. In particular, the corresponding input $u_i$ can be selected sufficiently large, with an appropriate sign, to satisfy the barrier condition. Therefore, if there exists an index $i\in\until{m}$ at each $\bx\in\Cc$  such that $L_{\bg_i}h(\bx)\neq 0$, then the CBF requirement~\eqref{eq:CBF_cond} can be satisfied. This property is precisely what relative degree one ensures. A similar conclusion holds for HOCBFs with high relative degree on $\cap_{i\in\until r} \Cc_i$.

An analogous condition for matrix-valued constraints, following the same logic, would require that $\bL_{\bg_i}\bH(\bx)$ be definite for some $i\in\until m$, so that the corresponding input direction can be scaled to enforce the matrix inequality~\eqref{eq:HOMCBF_cond_strict}. However, this requirement is overly conservative. Even relaxing this condition to only require that the set $\{\bL_{\bg_i}\bH(\bx)\}_{i\in\until m}$ spans a definite matrix remains restrictive, as such conditions are rarely satisfied in practice.

\subsection{Matrix Constraint Well-Posedness}
\label{sec:single_int}
We seek a more fundamental characterization of when a matrix-valued constraint can be rendered feasible. As a starting point, we consider the simplest setting of single-integrator dynamics and formulate a notion of well-posedness for sets defined by matrix constraints in this case. This notion will serve as the basis for later developing notions of matrix relative degrees.

Under single-integrator dynamics,~$\dot\bx = \bu$, the control input directly influences the state evolution, providing maximal flexibility in shaping trajectories. As such, a natural notion of \textit{well-posedness} is that the set defined by the matrix-valued constraint $\bPsi(\bx)\succeq\bzero$ can be enforced through an appropriate choice of control input in this setting.
To determine whether the matrix $\bPsi$ can be maintained positive semidefinite, it is not necessary to control the behavior of $\bPsi$ in all directions. Rather, only the directions associated with the smallest eigenvalue are critical, as these are the directions along which positive semidefiniteness is first lost. To this end, we propose the following mathematical definition for well-posedness.

\begin{definition}
\label{def:well-posed}
    Given a continuously differentiable matrix-valued function $\map{\bPsi}{\real^n}{\mathbb S^p}$, let $\lambda_{\min}(\bPsi(\bx))$ denote the smallest eigenvalue of $\bPsi(\bx)$, and let $\bV(\bx)$ be a matrix whose columns form an orthonormal basis for the eigenspace associated with $\lambda_{\min}(\bPsi(\bx))$. The set defined by $\bPsi(\bx)\succeq \bzero$ is \emph{well-posed} if, for each $\bx$ such that $\lambda_{\min}(\bPsi(\bx))=0$, there exists $\bu\in\real^n$ such that:
    \begin{equation}
        \sum_{j=1}^n 
        \bV(\bx)^\top 
        \frac{\partial \bPsi}{\partial x_j}(\bx)\,
        \bV(\bx)\, u_j 
        \succ \bzero.
        \label{eq:well-posed}
    \end{equation}
\end{definition}
There is a clear interpretation of the above condition when the smallest eigenvalue~$\lambda_{\min}(\bPsi(\bx))$ is simple. In this case, the mapping $\lambda_{\min} \circ \bPsi$ is differentiable, and condition~\eqref{eq:well-posed} reduces to requiring that its gradient $\frac{\partial (\lambda_{\min}\circ\bPsi)}{\partial \bx}\neq 0$ whenever $\lambda_{\min}(\bPsi(\bx))=0$. This is precisely the regular value condition, which ensures that the boundary of the set $\{\bx \mid \bPsi(\bx)\succeq 0\}$ is well-defined\footnote{Also related to the notion of practical set proposed in \cite[Ch. 4.2.1]{blanchini2008set} in connection to Nagumo's theorem}.

When $\lambda_{\min}$ is not simple (i.e. repeated) the condition is not identical to regular value, but \cref{eq:well-posed} is still sufficient as it ensures that $\bPsi$ can be controlled toward the interior of the positive semidefinite cone, and can be thought of as a nonsmooth regularity condition.

\begin{theorem}\label{thm:well-posed}
    Consider the set $\Cc =\{\bx \mid \bPsi(\bx)\succeq \bzero\}$ constructed from a continuously differentiable function $\map{\bPsi}{\real^n}{\mathbb S^p}$. If the set defined by the constraint $\bPsi(\bx)\succeq \bzero$ is well-posed in the sense of \cref{def:well-posed}, then under the single-integrator dynamics $\dot\bx = \bu$, there exists a smooth feedback controller $\map{\bk}{\real^n}{\real^n}$ such that $\bu=\bk(\bx)$ renders the set $\Cc$ forward invariant under the closed-loop dynamics.
\end{theorem}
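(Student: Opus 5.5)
The plan is to use the pointwise well-posedness condition~\eqref{eq:well-posed} to build local controllers near boundary points, glue them together with a partition of unity, and then conclude forward invariance through a sign argument on $\lambda_{\min}(\bPsi(\bx))$, in the spirit of Nagumo's theorem. Write $\partial\Cc_0 = \{\bx \mid \lambda_{\min}(\bPsi(\bx))=0\}$. This set is closed, since $\lambda_{\min}\circ\bPsi$ is continuous.

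\emph{Step 1 (local robustness).} Fix $\bx_0\in\partial\Cc_0$ and let $\bu_0$ satisfy~\eqref{eq:well-posed}. Let $\mu>0$ be the minimum eigenvalue of $\bV(\bx_0)^\top \bigl(\sum_j \frac{\partial \bPsi}{\partial x_j}(\bx_0) u_{0,j}\bigr)\bV(\bx_0)$, and let $\delta>0$ be the gap between $\lambda_{\min}(\bPsi(\bx_0))$ and the next eigenvalue. I will show there is a neighborhood $U_{\bx_0}$ of $\bx_0$ with the following property. For every $\bx\in U_{\bx_0}$ and every unit vector $\bv$ in the span $W(\bx)$ of eigenvectors of $\bPsi(\bx)$ whose eigenvalues lie within $\delta/2$ of $\lambda_{\min}(\bPsi(\bx))$, we have $\bv^\top D\bPsi(\bx)[\bu_0]\bv \ge \mu/2$. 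The argument uses continuity of spectral projectors onto eigenvalue clusters, e.g.\ via the Riesz projection, for which the cluster projector is continuous even when individual eigenvalues split. Combined with continuity of $D\bPsi$, this gives the bound: the directions in $W(\bx)$ are uniformly close to directions in the range of $\bV(\bx_0)$.

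\emph{Step 2 (gluing).} Cover $\partial\Cc_0$ by the open sets $U_{\bx_0}$, and add the open set $U_\infty = \real^n\setminus\partial\Cc_0$ with control $\bzero$. Take a smooth partition of unity $\{\phi_a\}$ subordinate to this cover and define $\bk(\bx)=\sum_a \phi_a(\bx)\bu_a$. This is smooth. The key observation is that on $\partial\Cc_0$ only the local controllers contribute, so the requirement there is convex in $\bu$. For points in several $U_a$, the cluster subspaces differ, so I will instead work with the bottom eigenspace $\bV(\bx)$ at $\bx$ itself, which is contained in each relevant $W_a(\bx)$. This gives $\bV(\bx)^\top D\bPsi(\bx)[\bk(\bx)]\bV(\bx)\succeq (\mu_{\min}/2)\bI$ locally on $\partial\Cc_0$, and in fact on a neighborhood of it after shrinking the sets, where $\mu_{\min}$ is a locally positive lower bound.

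\emph{Step 3 (invariance).} Suppose a trajectory starting in $\Cc$ leaves it. Let $t^*$ be the last time with $m(t)=\lambda_{\min}(\bPsi(\bx(t)))\ge0$, so that $m(t^*)=0$. The map $t\mapsto m(t)$ is a minimum of smooth functions, so its upper-right Dini derivative is
\[
\min_{\bv\in\mathrm{range}\,\bV,\ \|\bv\|=1} \bv^\top \tfrac{d}{dt}\bPsi\,\bv ,
\]
which is positive at $t^*$ by Step 2. Hence $m>0$ immediately after $t^*$, a contradiction. Alternatively, Nagumo's theorem applies: Step 2 shows $\bk(\bx)$ points into the Bouligand tangent cone of $\Cc$ at every boundary point, and uniqueness of solutions holds because $\bk$ is smooth.

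The main obstacle is Step 1, specifically the eigenvalue-multiplicity issue. Near a point with a repeated $\lambda_{\min}$, the minimal eigenspace jumps in dimension, so $\bV(\bx)$ is discontinuous. That is why the cluster projector is needed rather than $\bV$ itself. I will handle this with the Riesz-projection continuity argument together with the fact that uniform positivity on the cluster subspace implies positivity on its bottom subspace.
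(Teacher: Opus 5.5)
Your proposal is correct and follows essentially the same route as the paper: it shows that a fixed boundary control remains valid locally by continuity of the cluster spectral projector (Kato/Riesz), glues these controls with an Artstein-style smooth partition of unity using convexity of the condition in $\bu$, and finishes with a Nagumo-type invariance argument. Your extra details make explicit steps the paper only sketches: the complementary open set $\real^n\setminus\partial\Cc_0$ carrying the zero control, restricting to the bottom eigenspace at the evaluation point so that locally certified controls can be convexly combined, and the Danskin-type right-derivative argument for $\lambda_{\min}(\bPsi(\bx(t)))$ in place of citing Blanchini.
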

\begin{proof}
     Let $\bx\mapsto \bu^*(\bx)$ denote the control given from~\eqref{eq:well-posed} for all $\bx\in\partial\Cc$, i.e., where $\lambda_{\min}(\bPsi(\bx))=0$.  For $\bx\in{\rm int}(\Cc)$, let $\bu^*(\bx)$ be defined arbitrarily, e.g., $\bu^*(\bx)=\bzero$. The control choice $\bu^*$ may not be a continuous function, and this proof aims to construct $\bk$ from it.

     The proof follows the arguments underlying Artstein's smooth controller construction~\cite[Thm. 4.1]{ZA:83}. We first show that if $\bu^*(\bx)$ satisfying~\eqref{eq:well-posed} at $\bx$, the same control~$\bu^*(\bx)$, evaluated at $\bx$, also satisfies the condition on some neighborhood $\Wc(\bx)$ of $\bx$. 
     
     We note that the condition~\eqref{eq:well-posed} is equivalent to: 
     \begin{equation}
     \label{eq:well-posed_eigen}
     \bv^\top\sum_{j=1}^n \frac{\partial \bPsi}{\partial x_j}(\bx)u_j^*(\bx)\bv > 0
     \end{equation}
     for all $\bv$ in the image of $\bV(\bx)$, i.e., in the eigenspace associated with the eigenvalue $\lambda_{\min}(\bPsi(\bx)) = 0$. Since eigenvalues are Lipschitz continuous, there exists a neighborhood $\Wc_0(\bx)$ of $\bx$ such that all other nonzero eigenvalues remain separated from the eigenvalues clustered around zero.  In this neighborhood, the multiplicity of $\lambda_{\min}(\bx)=0$ may decrease, splitting into multiple eigenvalues. Let
     $\bx\mapsto\Vc(\bx)$ denote the direct sum of the eigenspaces associated with these cluster of eigenvalues, then the total spectral projector $\bx'\mapsto\bP(\bx')$ onto $\Vc(\bx')$ is continuous (adopting smoothness from $\bPsi$) at each $\bx'\in \Wc_0(\bx)$~\cite[Ch. 2 Sec. 1]{TK:66}. Here, we note the projector satisfies $\bP(\bx')\bv = \bv$ for any $\bv\in\Vc(\bx')$. Using this property, the well-posedness condition~\eqref{eq:well-posed}, at $\bx$ in particular, can be equivalently rewritten as:
     $$
     \hat\bv^\top \bP(\bx)^\top\sum_{j=1}^n\frac{\partial \bPsi}{\partial x_j}(\bx)u_j^*(\bx)\bP(\bx)\hat\bv \geq \eta > 0
     $$
     for all unit vectors $\hat\bv\in\Vc(\bx)$ such that $\|\hat\bv\|=1$. The constant $\eta>0$ is the minimum bound achieved from the compactness of the unit sphere $\|\hat\bv\|=1$. Consequently, from continuity of $\bP$ and $\frac{\partial\bPsi}{\partial \bx}$, there exists a smaller neighborhood $\Wc(\bx)\subset \Wc_0(\bx)$ such that, we can hold $\bu^*(\bx)$ constant and obtain:
     $$
     \bv^\top \bP(\bx')^\top\sum_{j=1}^n\frac{\partial \bPsi}{\partial x_j}(\bx')u_j^*(\bx)\bP(\bx')\bv > 0
     $$
     for all $\bx'\in\Wc(\bx)$ and all $\bv\in\Vc(\bx')\setminus\{\bzero\}$. In other words, $\bu=\bu^*(\bx)$ satisfies in a neighborhood $\Wc(\bx)$. The proof proceeds by constructing a controller $\bk$ with a smooth partition of unity using a countable subcover of $\{\Wc(\bx)\}_{\bx\in\Cc}$, covering $\Cc$, see also~\cite[Thm. 4.1]{ZA:83}. To complete the proof, we note that condition~\eqref{eq:well-posed} is convex in $\bu$, and therefore, the controller $\bk$ constructed from partition of unity will also meet the condition. Hence, the smooth controller $\bk$ renders the set $\Cc$ forward invariant (see sufficiency in~\cite[Ch. 4.2.1]{blanchini2008set}), concluding the proof. 
\end{proof}

Theorem~\ref{thm:well-posed} establishes that the well-posedness condition guarantees the existence of a control law that renders the set $\Cc = \{\bx \mid \bPsi(\bx)\succeq \bzero\}$ forward invariant under single-integrator dynamics. This result formalizes a minimal requirement under which matrix-valued constraints can be enforced through control inputs, and serves as a baseline notion of feasibility. Building on this insight, we next develop a notion of relative degrees for matrix-valued functions.

\begin{remark}
In light of Theorem~\ref{thm:well-posed}, we can make similar arguments as in the scalar case in \cite[Ch. 4.2.1]{blanchini2008set} to establish that forward invariance can be deduced from Lie derivative inequalities. In contrast, without well-posedness, barrier conditions alone are generally insufficient to guarantee forward invariance. We omit a full  technical treatment of this fact due to space limitations. Note also that our Definition~\ref{def:HOMCBF} proposes~\eqref{eq:HOMCBF_cond_strict}  with a strict inequality, which implicitly ensures well-posedness.
\end{remark}

\subsection{Matrix Relative Degrees}
The intuition gained from \cref{sec:single_int}, on necessitating control over the eigenspace associated with the minimum eigenvalue, can be extended to general control affine systems. Specifically, we generalize \cref{def:well-posed} to define a notion of matrix relative degree. 

\begin{definition}
\label{def:matrix_relative_degree}
A smooth function $\bH:\mathbb{R}^n\to\mathbb{S}^p$ is said
to have \emph{matrix relative degree} $r\in\mathbb{N}$ with respect to the input $\bu$ for \cref{sys:ctrl_affine} at $\bx\in\mathbb{R}^n$ if the following hold:
\begin{enumerate}
    \item for each $j\in\until m$ and each $k\in\{0,\dots,r-2\}$:
    \begin{equation}\label{eq:matrix_reldeg_lower}
            \bL_{\bg_j}\bL_\bf^k\bH(\bx)= \bzero
    \end{equation}

    \item there exists $\bu\in\mathbb{R}^m$ such that:
    \begin{equation}\label{eq:matrix_reldeg_r}
        \bV_r(\bx)^\top\!\left(\sum_{j=1}^m \bL_{\bg_j}\bL_\bf^{r-1}\bH(\bx)\,u_j\right)\bV_r(\bx)\succ \bzero.
    \end{equation}
\end{enumerate}
Moreover, $\bH$ is said to have matrix relative degree $r$ on
$\mathcal{E}\subseteq\mathbb{R}^n$ if it has matrix relative degree $r$ for all
$\bx\in\mathcal{E}$.
\end{definition}

While Definition~\ref{def:matrix_relative_degree} provides a precise characterization, it may be difficult to verify directly as it involves solving a LMI. The following result provides a simple sufficient condition that can be checked pointwise.
\begin{proposition}
\label{prop:matrix_relative_degree_sufficient}
Given a smooth matrix-valued function $\map{\bH}{\real^n}{\mathbb S^p}$, let $\{\bPsi_i\}_{i\in\until{r}}$ be defined recursively as in \cref{eq:HOMCBF}. Let $\lambda_{\min}(\bPsi_r(\bx))$ denote the smallest eigenvalue of $\bPsi_r(\bx)$, and let $\bV_r(\bx)$ be a matrix whose columns form an orthonormal basis for the eigenspace associated with $\lambda_{\min}(\bPsi_r(\bx))$.
If there exists an index
    $j^*\in\until m$ such that the matrix:
\begin{equation}\label{eq:sufficient_matrix_reldeg}
    \bV_r(\bx)^\top \bL_{\bg_{j^*}}\bPsi_r(\bx)\,\bV_r(\bx)
\end{equation}
is either positive definite or negative definite, then $\bH$ has matrix relative degree $r\in\naturals$ at $\bx$.
\end{proposition}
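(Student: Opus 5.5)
The plan is to verify the two conditions of Definition~\ref{def:matrix_relative_degree} directly, with $\bV_r(\bx)$ the minimum-eigenspace basis of $\bPsi_r(\bx)$, consistent with the statement.

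\textbf{Condition~2.} Let $\mathbf{M} = \bV_r(\bx)^\top \bL_{\bg_{j^*}}\bPsi_r(\bx)\,\bV_r(\bx)$, which by hypothesis is either positive or negative definite. Pick the input $\bu = \sigma e_{j^*}$, where $e_{j^*}$ is the $j^*$-th standard basis vector of $\real^m$. Take $\sigma = 1$ if $\mathbf{M}\succ\bzero$ and $\sigma=-1$ if $\mathbf{M}\prec\bzero$. Then the sum $\sum_j \bV_r^\top \bL_{\bg_j}\bPsi_r \bV_r\, u_j$ collapses to $\sigma \mathbf{M}\succ \bzero$. Hence~\eqref{eq:matrix_reldeg_r} holds, provided $\bL_{\bg_j}\bPsi_r(\bx) = \bL_{\bg_j}\bL_\bf^{r-1}\bH(\bx)$ for every $j$.

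\textbf{Identifying $\bL_{\bg_j}\bPsi_r$ with $\bL_{\bg_j}\bL_\bf^{r-1}\bH$.} I would show by induction on $i$ that
\[
\bPsi_i = \bL_\bf^{i-1}\bH + \mathbf{R}_i\bigl(\bH,\bL_\bf\bH,\dots,\bL_\bf^{i-2}\bH\bigr),
\]
where $\mathbf{R}_i$ is a smooth function of the lower-order Lie derivatives built from the $\balpha_k$. Applying $\bL_{\bg_j}$ and using the chain rule, $\bL_{\bg_j}\mathbf{R}_i$ is a combination of the terms $\bL_{\bg_j}\bL_\bf^k\bH$ with $k\le i-2$. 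Consequently, whenever condition~1 holds, $\bL_{\bg_j}\bPsi_r = \bL_{\bg_j}\bL_\bf^{r-1}\bH$. The spectrally defined $\balpha_k$ need some care: the chain rule through the matrix function $\balpha_k$ gives a Fr\'echet derivative $D\balpha_k(\bPsi_k)[\bL_{\bg_j}\bPsi_k]$. This derivative is linear in $\bL_{\bg_j}\bPsi_k$, so it vanishes when that term does. This requires the $\balpha_k$ to be smooth enough, which I assume, as the construction already does.

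\textbf{Condition~1.} This is the main obstacle, because the hypothesis is pointwise at level $r$ and says nothing about lower levels. I would rely on the definition of $r$ in~\eqref{eq:HOMCBF}: it is the first index at which some $\bL_{\bg_j}\bPsi_r\not\equiv\bzero$. So $\bL_{\bg_j}\bPsi_i\equiv \bzero$ for all $i<r$ and all $j$. An induction using the identity above then shows $\bL_{\bg_j}\bL_\bf^{k}\bH \equiv \bzero$ for $k\le r-2$, since $\bL_{\bg_j}\bPsi_{k+1} = \bL_{\bg_j}\bL_\bf^k\bH + (\text{terms vanishing by the inductive hypothesis})$. This gives~\eqref{eq:matrix_reldeg_lower}. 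Combined with the previous two steps, $\bH$ has matrix relative degree $r$ at $\bx$.
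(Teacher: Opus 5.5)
Your proposal is correct and follows essentially the same route as the paper: condition~1 comes from the definition of $r$ through the recursive construction of $\{\bPsi_i\}$. Condition~2 comes from setting $u_j=0$ for $j\neq j^*$ and giving $u_{j^*}$ the appropriate sign; your choice $\sigma=\pm 1$ suffices because the inequality is homogeneous in $\bu$, whereas the paper takes $u_{j^*}$ large enough. Your explicit chain-rule induction identifying $\bL_{\bg_j}\bPsi_r$ with $\bL_{\bg_j}\bL_\bf^{r-1}\bH$, including the observation that the Fr\'echet derivative of the spectral $\balpha_k$ is linear in $\bL_{\bg_j}\bPsi_k$, simply fills in a step the paper asserts without proof.
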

\begin{proof}
The recursive construction of $\{\bPsi_i\}_{i\in\until{r}}$ ensures that $\bL_{\bg_j}\bPsi_i =\bL_{\bg_j}\bL_\bf^{i-1}\bH(\bx) =\bzero$ for all $j\in\until m$ and all $i\in\until{r-1}$, which is precisely~\eqref{eq:matrix_reldeg_lower}. In addition, by selecting $u_j=0$ for all $j\neq j^*$ and $u_{j^*}$ large enough with an appropriate sign, we can ensure $\bV_r(\bx)^\top\!\left(\sum_{j=1}^m \bL_{\bg_j}\bPsi_r(\bx)\,u_j\right)\bV_r(\bx)\succ \bzero$, which is equivalent to~\eqref{eq:matrix_reldeg_r}. 
\end{proof}
\cref{prop:matrix_relative_degree_sufficient} provides a condition that does not require solving for a control input $\bu$. Instead, the condition is a simple algebraic expression that is more convenient to verify in practice. In addition, it is less conservative than requiring $\bL_{\bg_j}\bPsi_r(\bx)$ itself to be positive definite or negative definite, since definiteness is only required after projection onto the eigenspace associated with the minimum eigenvalue of $\bPsi_r(\bx)$.  On the other hand, the condition does not guarantee direct control over the remaining eigendirections. However, as we show next, these directions can be accommodated through the optimal-decay CBF (OD-CBF) framework.

\section{Optimal Decay HOMCBF}
\label{sec:optimal_decay}
While matrix relative degree provides a useful characterization for enforcing matrix-valued safety constraints, requiring it to hold uniformly over the entire safe set can be restrictive in practice, as is also observed in the scalar case~\cite[Sec. 4B]{ong2025properties}. Nevertheless, by leveraging ideas from the OD-CBF framework in~\cite{ong2025properties}, we can restrict this requirement to only the boundary of the safe set. Moreover, this method has the additional benefit of accommodating non-minimal eigendirections.
We introduce an auxiliary variable in the following definition to formalize this idea.
\begin{definition}\label{def:OD-HOMCBF}
    Given a smooth function $\map{\bH}{\real^n}{\mathbb S^p}$, let the functions $\{\bPsi_i\}_{i\in\until{r}}$ be defined recursively as in \cref{eq:HOMCBF}. For each $i\in\until r$, define the set $\Cc_i$ corresponding to $\bPsi_i$ as:
    \begin{equation}
        \Cc_i = \setdefb{\bx\in\real^n}{\bPsi_i(\bx)\succeq 0}.
    \end{equation}
    The function $\bH$ is a \emph{optimal decay high-order matrix control barrier function} (OD-HOMCBF) for system~\eqref{sys:ctrl_affine} if there exists an extended class-$\Kc^e$ function $\balpha_r$ such that, for each $\bx\in\cap_{i\in\until r} \Cc_i$, there exists a $\bu\in\real^m$ and $\omega \in \real_{\ge 0}$ satisfying:
    \begin{equation}\label{eq:OD-HOMCBF_cond_strict}
    \dot{\bPsi}_r(\bx,\bu) \succ -\omega \balpha_r(\bPsi_r(\bx))
\end{equation}
\end{definition}
The auxiliary variable $\omega$ allows the decay rate of the non-critical eigenvalues of $\bPsi_r$ to be relaxed, making it feasible even when control authority is only required to influence the minimum eigenvalues at points where it is zero.
This additional variable can be easily incorporated into the SDP safety filter by weighting $\omega$ into the preexisting cost function, as formalized in the following theorem.
\begin{theorem}
\label{thm:OD-HOMCBF}
    Consider the control-affine system~\eqref{sys:ctrl_affine} with sets $\{\Cc_i\}_{i\in\until r}$ defined as in~\eqref{eq:HO_safe_set}. If $\bH$ is a OD-HOMCBF for~\eqref{sys:ctrl_affine}, then any continuous feedback controller $\map{\bk}{\real^n}{\real^m}$ and $\map{\theta}{\real^n}{\real_{\ge 0}}$  satisfying: 
    \begin{equation}
    \dot\bPsi_r(\bx,\bk(\bx))\succeq -\theta(\bx)\balpha_r(\bPsi_r(\bx)),
    \end{equation}
    for all $\bx$ in an open neighborhood of $\cap_{i\in\until r}\Cc_i$, renders the intersection $\cap_{i\in\until r} \Cc_i$ forward invariant for the closed-loop system under the state feedback $\bu=\bk(\bx)$.
    
    In particular, the CBF-SDP controller given by:
    \begin{align}
        \label{eq:OD-HOMCBF-SDP}
        \bk(\bx) = \argmin_{\substack{\bu \in \mathbb{R}^m \\ \omega \in \mathbb{R}_{\ge 0}}} & \quad \|\bu-\bk_{\rm nom}(\bx)\|^2  + p (\omega - \theta_d)^2\\
                  \textup{s.t.} \quad &\dot \bPsi_r(\bx,\bu) \succeq - \omega\balpha_r(\bPsi_r(\bx))\nonumber \\
                  &\omega\ge \theta_d \nonumber
    \end{align}
    is continuous on an open neighborhood of $\cap_{i\in\until r}\Cc_i$, and therefore renders the intersection $\cap_{i\in\until r} \Cc_i$ forward invariant for the closed-loop system.
\end{theorem}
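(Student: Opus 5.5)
The plan is to follow the proof of \cref{thm:HOMCBF} closely, with the state-dependent gain $\theta(\bx)$ absorbed into a time-varying comparison function. Let $\Ec\supset\cap_{i\in\until r}\Cc_i$ be the open neighborhood on which the condition holds. Suppose a closed-loop trajectory starting in the intersection leaves it. Exactly as in \cref{thm:HOMCBF}, I pick $t^*$ with $\bx(t)\in\Ec$ on $[0,t^*]$ and $\bx(t^*)\notin\cap_{i\in\until r}\Cc_i$.

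\textbf{Step 1: comparison argument at the top level.} Along the trajectory,
\[
\tfrac{d}{dt}\bPsi_r(\bx(t))\succeq-\theta(\bx(t))\,\balpha_r(\bPsi_r(\bx(t))).
\]
I would show $\bPsi_r(\bx(t))\succeq\bzero$ on $[0,t^*]$ by a spectral argument.
\begin{itemize}
\item For any unit $\bv$ in the eigenspace of $\lambda_{\min}(\bPsi_r)$, we have $\bv^\top\balpha_r(\bPsi_r)\bv=\alpha_r(\lambda_{\min}(\bPsi_r))$, since $\balpha_r$ acts spectrally.
\item Hence the upper-right Dini derivative of $\lambda_{\min}(\bPsi_r(\bx(t)))$ is bounded below by $-\theta(\bx(t))\,\alpha_r(\lambda_{\min})$. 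Here I use the Danskin-type formula for the derivative of a minimum eigenvalue, as in \cite[Prop. 1]{ong2025matrix}.
\item $\theta$ is continuous, hence bounded on the compact trajectory segment, say by $\bar\theta\ge 0$. The scalar comparison system $\dot y=-\theta(\bx(t))\alpha_r(y)$ is therefore nonautonomous with a continuous, nonnegative coefficient.
\item Its solution from $y(0)\ge0$ stays nonnegative, because $y\equiv0$ is an equilibrium. To avoid uniqueness issues with non-Lipschitz $\alpha_r$, I would compare only on the region $\lambda_{\min}\le 0$. There $-\theta\alpha_r(\lambda_{\min})\ge 0$, so $\lambda_{\min}$ cannot decrease once it is nonpositive, and thus it cannot cross below zero.
\end{itemize}

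\textbf{Step 2: propagation down the recursion.} This uses only \eqref{eq:HOMCBF} and the original class-$\Kc^e$ functions $\balpha_i$, $i<r$, which are unaffected by $\theta$. Since $\bPsi_r\succeq\bzero$, we get $\tfrac{d}{dt}\bPsi_{r-1}\succeq-\balpha_{r-1}(\bPsi_{r-1})$, and the comparison step of \cref{thm:HOMCBF} applies verbatim. Repeating this yields $\bPsi_i\succeq\bzero$ for all $i$ on $[0,t^*]$, which contradicts the choice of $t^*$.

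\textbf{Step 3: continuity of the SDP \eqref{eq:OD-HOMCBF-SDP}.} The cost is strongly convex in $(\bu,\omega)$ and the constraints are LMIs, so the minimizer is unique. It suffices to establish Slater's condition on a neighborhood of $\cap_i\Cc_i$ and then invoke \cite[Thm. 1]{ong2025matrix}, or standard continuity of parametric convex programs.
\begin{itemize}
\item By \cref{def:OD-HOMCBF}, at each point of the intersection there is $(\bu,\omega)$ satisfying the strict inequality \eqref{eq:OD-HOMCBF_cond_strict}.
\item We also need $\omega>\theta_d$ strictly. I expect this to be the main obstacle, since the definition only gives some $\omega\ge0$, possibly smaller than $\theta_d$. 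I would address it by scaling: for $c>1$, the pair $(c\bu,c\omega)$ gives $\dot\bPsi_r(\bx,c\bu)+c\omega\balpha_r(\bPsi_r)=c\big(\bL_{\bf}\bPsi_r+\cdots\big)+(1-c)\bL_\bf\bPsi_r$. This does not scale cleanly, because of the drift term.
\item My first alternative is to increase $\omega$ alone. On the intersection, $\bPsi_r\succeq\bzero$, so $\balpha_r(\bPsi_r)\succeq\bzero$, and increasing $\omega$ only adds a PSD term. Strict feasibility is preserved, and we may take $\omega>\theta_d$.
\item Continuity of all data then extends strict feasibility to an open neighborhood. The increase of $\omega$ is done at points of the intersection first, before extending by continuity.
\end{itemize}
Finally, the resulting $\theta(\bx):=\omega^*(\bx)\ge\theta_d\ge 0$ is continuous, so the first part of the theorem yields forward invariance.
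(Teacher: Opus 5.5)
Your proposal is correct and follows the same route as the paper, which simply declares the argument identical to that of \cref{thm:HOMCBF}. In that argument, the nonnegative state-dependent factor $\theta$ is absorbed into the top-level matrix comparison step, positivity propagates down the recursion unchanged, and continuity of the SDP follows from Slater's condition. Your explicit $\lambda_{\min}$ Dini-derivative argument (showing that only $\theta\ge 0$ matters) and your observation that $\omega$ can be raised above $\theta_d$ at points of $\cap_{i\in\until r}\Cc_i$, because $\balpha_r(\bPsi_r)\succeq\bzero$ there, before extending strict feasibility by continuity, supply details the paper leaves implicit.
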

\begin{proof}
    The proof is identical to that of \cref{thm:HOMCBF}. The only modification is
that the terminal inequality is now:
\[
\dot{\bPsi}_r(\bx,\bk(\bx)) \succeq -\theta(\bx)\balpha_r(\bPsi_r(\bx)),
\]
with $\theta(\bx)\ge \theta_d$. Since this is the same matrix comparison inequality as
in \cref{thm:HOMCBF}, up to a nonnegative state-dependent scaling of the
class-$\Kc$ term, the same recursive argument implies successively that
$\bPsi_r,\bPsi_{r-1},\dots,\bPsi_0$ remain positive semidefinite along the
closed-loop trajectories. Hence $\bigcap_{i\in[r]} \Cc_i$ is forward invariant.
\end{proof}

We can now prove that with an OD-HOMCBF and uniform matrix relative degree on the boundary safe sets where $\bPsi_r \not\succ \bzero$ we can get forward invariance of the safe set under smooth feedback control.
\begin{theorem}
Consider the control-affine system~\eqref{sys:ctrl_affine} with sets $\{\Cc_i\}_{i\in[r]}$ defined as in \eqref{eq:HO_safe_set}. If $\bH$ has matrix relative degree $r$ on an open neighborhood of the boundary of $\bigcap_{i\in[r]} \Cc_i$ where $\bPsi_r \not\succ \bzero$ ($\partial(\bigcap_{i\in[r]} \Cc_i)\bigcap\partial\Cc_r$), then $\bH$ is an OD-HOMCBF on $\bigcap_{i\in[r]} \Cc_i$. Consequently, there exists a continuous feedback controller $\bk:\mathbb{R}^n\to\mathbb{R}^m$ such that the set $\bigcap_{i\in[r]} \Cc_i$ is forward invariant for the closed-loop system under $\bu=\bk(\bx)$.
\end{theorem}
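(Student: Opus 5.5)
The plan is to verify the OD-HOMCBF condition \eqref{eq:OD-HOMCBF_cond_strict} pointwise on $\bigcap_{i\in[r]}\Cc_i$ by splitting into two cases according to whether $\bPsi_r(\bx)$ is positive definite. After that, we obtain the continuous controller by invoking \cref{thm:OD-HOMCBF}.

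\emph{Interior case.} Suppose $\bPsi_r(\bx)\succ\bzero$. Then $\balpha_r(\bPsi_r(\bx))\succ\bzero$, because $\balpha_r$ acts spectrally and $\alpha_r$ is strictly increasing with $\alpha_r(0)=0$. Take $\bu=\bzero$. Since $\bL_\bf\bPsi_r(\bx)$ is bounded, we can choose $\omega$ large, for instance $\omega>\|\bL_\bf\bPsi_r(\bx)\|/\lambda_{\min}(\balpha_r(\bPsi_r(\bx)))$. This gives $\bL_\bf\bPsi_r(\bx)+\omega\balpha_r(\bPsi_r(\bx))\succ\bzero$, so no control authority is needed at such points.

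\emph{Boundary case.} Suppose $\bPsi_r(\bx)\not\succ\bzero$ and $\bx\in\bigcap_i\Cc_i$. Then $\lambda_{\min}(\bPsi_r(\bx))=0$, and $\bx$ lies in the set where the matrix relative degree $r$ is assumed. By the recursion \eqref{eq:HOMCBF} together with \eqref{eq:matrix_reldeg_lower}, we have $\bL_{\bg_j}\bPsi_r=\bL_{\bg_j}\bL_\bf^{r-1}\bH$. Hence \eqref{eq:matrix_reldeg_r} gives a $\bu_0$ with $\bV_r^\top\bL_{\bg}\bPsi_r\bu_0\bV_r\succ\bzero$, where $\bV_r$ spans $\ker\bPsi_r(\bx)$. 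I would write $\bPsi_r(\bx)$ in the orthonormal basis $[\bV_r\ \bV_\perp]$. The $\bV_\perp$ block of $\balpha_r(\bPsi_r)$ is then $\succeq c\bI$ for some $c>0$, while the $\bV_r$ block of $\balpha_r(\bPsi_r)$ vanishes. Set $\bu=s\bu_0$ and $\omega$ large, and consider the block matrix $\bM(s,\omega)=\dot\bPsi_r(\bx,s\bu_0)+\omega\balpha_r(\bPsi_r(\bx))$. Its $(1,1)$ block is $\bV_r^\top\bL_\bf\bPsi_r\bV_r+s\,\bV_r^\top(\cdot)\bV_r$, which is positive definite for $s$ large. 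Its off-diagonal block grows only linearly in $s$. Its $(2,2)$ block grows like $\omega c$. By a Schur complement argument, choosing $\omega\gg s^2$ makes $\bM\succ\bzero$. This establishes the OD-HOMCBF property on all of $\bigcap_i\Cc_i$.

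\emph{Continuous controller.} The OD-HOMCBF property makes the SDP \eqref{eq:OD-HOMCBF-SDP} strictly feasible on the intersection. By continuity of the data, strict feasibility extends to an open neighborhood. Slater's condition then gives continuity of the SDP solution map, as in \cref{thm:HOMCBF}, and \cref{thm:OD-HOMCBF} yields forward invariance.

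The main obstacle is the boundary case, and specifically controlling the off-diagonal coupling between the kernel directions and the complementary eigendirections when $u$ is scaled up. The Schur complement with $\omega$ chosen quadratically in $s$ is the tool I would try first. A secondary subtlety is that well-posedness near the boundary needs the relative degree on a neighborhood, not just on the boundary itself. That requirement is what makes the strict-feasibility neighborhood in the continuity argument nonempty around points where eigenvalues cluster at zero. Here I would reuse the spectral-projector continuity argument from the proof of \cref{thm:well-posed}.
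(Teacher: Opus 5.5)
Your proof has the same skeleton as the paper's. You split on whether $\bPsi_r(\bx)\succ\bzero$, use an arbitrary input and a large $\omega$ in the interior, use the matrix-relative-degree input on the singular part of the boundary, and then invoke \cref{thm:OD-HOMCBF} (strict feasibility, Slater, continuity of the SDP).

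The genuine difference is in the boundary case, and there your argument is the sounder one. The paper keeps the unscaled input $\bu^*$ and sets $\omega\geq -\lambda_{\min}(\dot\bPsi_r(\bx))/\alpha_r(\lambda'(\bPsi_r(\bx)))$ ``with earlier arguments''. Write $\bv=\bv_0+\bv_\perp$ with $\bv_0\in\ker\bPsi_r(\bx)$. That choice only gives $\bv^\top(\dot\bPsi_r+\omega\balpha_r(\bPsi_r))\bv\geq\lambda_{\min}(\dot\bPsi_r)\|\bv_0\|^2$, which is negative whenever $\lambda_{\min}(\dot\bPsi_r)<0$. Neither the drift contribution $\bV_r^\top\bL_\bf\bPsi_r\bV_r$ in the kernel block nor the kernel/complement coupling is controlled. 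For example, take $\bPsi_r=\mathrm{diag}(0,1)$, let $\dot\bPsi_r(\bx,\bu^*)$ have rows $(1,2)$ and $(2,0)$, and let $\alpha_r$ be the identity. Positive definiteness then requires $\omega>4$, whereas the paper's bound only imposes $\omega\geq(\sqrt{17}-1)/2$.

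Your two steps are exactly what close this. First you scale $\bu=s\bu_0$ so the $(1,1)$ block is positive definite despite the drift. Then you apply a Schur complement with $\omega$ taken large for that fixed $s$. Choosing $\omega\gg s^2$ is more than is needed, but harmless.

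One small correction to your final remark. Extending strict feasibility to an open neighborhood needs no spectral-projector argument. The OD-HOMCBF inequality is an unprojected strict LMI, so a pair $(\bu,\omega)$ that is strictly feasible at $\bx$ remains so nearby by continuity alone.
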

\begin{proof}
Let $\bx \in \bigcap_{i\in[r]} \Cc_i$ be such that $\bPsi_r(\bx)\succ \bzero$, then for any $\bv\in\real^p$, $\bv^\top\bPsi_r(\bx)\bv >0$. Therefore, for an arbitrary $\bu\in\real^m$, 
$$
\bv^\top\dot\bPsi_r(\bx,\bu)\bv > -\omega \bv^\top\balpha_r(\bPsi_r(\bx))\bv,
$$
can be satisfied for any $\bv\in\real^p$ with an $\omega\geq \small\frac{-\lambda_{\min}(\dot \bPsi_r(\bx))}{\alpha_r(\lambda_{\min}(\bPsi_r(\bx)))}$.
Therefore, there exists $\bu\in\real^m$ and $\omega\geq 0$ satisfying \eqref{eq:OD-HOMCBF_cond_strict}.

Let $\bx \in \bigcap_{i\in[r]} \Cc_i$ be such that $\bPsi_r \not\succ \bzero$ but $\bPsi_r \succeq \bzero$. Here, if all eigenvalues of $\bPsi_r(\bx)$ are zero, then condition~\eqref{eq:matrix_reldeg_r} from matrix relative degree imposes the existence of $\bu^\ast$ such that $\bv^\top\dot\bPsi_r(\bx,\bu^\ast)\bv>0$ for any $\bv$, and \eqref{eq:OD-HOMCBF_cond_strict} holds with $\bu^*$ and any $\omega\geq 0$.  If there exists a nonzero eigenvalues, let $\lambda'(\bPsi_r(\bx))$ denote the smallest nonzero eigenvalue. Then, the control $\bu^*$ with $\omega\geq \small\frac{-\lambda_{\min}(\dot \bPsi_r(\bx))}{\alpha_r(\lambda'(\bPsi_r(\bx)))}$ satisfies \eqref{eq:OD-HOMCBF_cond_strict} with earlier arguments.

Since $\bH$ is a OD-HOMCBF, \cref{thm:OD-HOMCBF} applies and implies that the
closed-loop system under $\bu = \bk(\bx)$ where $\bk(\bx)$ can come from \cref{eq:OD-HOMCBF-SDP} renders
$\bigcap_{i\in[r]} \Cc_i$ forward invariant.
\end{proof}
Unlike in the scalar case, matrix relative degree alone is not able to certify $\bH$ is a HOMCBF, as a HOMCBF requires control over all the eigendirections, not just the minimum one.
This showcases the contribution of the auxiliary variable $\omega$, and the OD-CBF framework to the matrix case as it not only fixes the issues that necessitated checking relative degree over the entire set, but also establishes forward invariance with only direct control over the minimum eigenvalue. 
Under the OD-HOMCBF framework, we are able to ensure forward invariance of the safe set, by first validating the relative degree over the necessary part of the boundary, which can be done point-wise with {\cref{prop:matrix_relative_degree_sufficient}}, and then the SDP controller from $\cref{eq:OD-HOMCBF-SDP}$ can be used to render the set forward invariant and safe.
This contribution is demonstrated on the problem of ensuring localization safety.

\section{Application in Safe Localization}
\subsection{Problem}
Modern localization algorithms estimate the systems state over sensor measurements in real time.
There are several methods for estimating the state using the measurement, e.g., Kalman filtering and state observers.
However, recent works in the robotics literature have shown the superiority of optimization-based approaches that involve solving a nonlinear least squares (NLS) problem in an online fashion \cite{leutenegger2015keyframe,barfoot2024state,chen2023direct}. Regardless of the method, autonomy, planning, and control, all rely on the accuracy of state estimates, and therefore, localization failures is detrimental to safe operation.
Nevertheless, localization failures still occur in practice despite significant advances in sensing and computation.
These failures often occur rapidly with little warning, making recovery difficult. Motivated by this, methods such as \cite{costante2016perception,falanga2018pampc,gessow2024information} are developed to prevent localization failures.
The method in \cite{gessow2024information} directly addresses the issue by employing a CBF to ensure the optimization has a unique minimum. The safety condition considered is fundamentally a matrix condition, but was previously addressed via scalar methods, adding unnecessary complication.

This problem can be formally expressed by considering the control affine system~\eqref{sys:ctrl_affine} with a nonlinear measurement model $\mathfrak{m} : \mathbb{R}^n \rightarrow \mathbb{R}^p$ as:
\begin{equation}
    \label{eq:meas_model}
    \by = \mathfrak{m}(\bx).
\end{equation}
We assume that the measurement map $\mathfrak{m}$ is injective, so that the
state $\bx$ can be uniquely reconstructed from the measurement $\by$.
In particular, given measurements $\by \in \mathbb{R}^p$, the state $\bx$ can be obtained via solving the NLS problem:
\begin{equation}
\label{eq:nl}
        \bx =\argmin_{\bxi \, \in \, \mathbb{R}^n} \,  \|\by - \mathfrak{m}(\bxi)\|_{\Sigma(\bxi)}^2 \triangleq \argmin_{\bxi \, \in \, \mathbb{R}^n} J(\bxi),
\end{equation}
where $\Sigma : \mathbb{R}^n \rightarrow \mathbb{S}^{n}$ is a state-dependent positive definite matrix that represents the level of confidence in different components of $\by$.
We allow $\Sigma$ to be state-dependent as a way to capture measurement degradation/dropout; this will be discussed more in \cref{sec:results}.

At any given state $\bx$, the optimization is locally convergent around the true state $\bx$ if the cost function $J$ has a positive definite Hessian $\nabla^2_{\bxi} J(\bxi)$ when evaluated at $\bxi=\bx$. Thus, the collection of states where the NLS problem is well-behaved can be described as:
\begin{equation}
    \Sc = \setdefb{\bx\in\real^n}{\bH(\bx)\triangleq \nabla^2_{\bxi} J(\bx)+\lambda_s\bI \succeq \bzero}
    \label{eq:ICBF_safe_set}
\end{equation}
where $\lambda_s > 0$ specifies the desired level of strong convexity. The set $\Sc$ represents the safety constraint associated with the state estimation task.
Since the safe set is described by a matrix-valued function, it naturally fits into the MCBF framework.

\begin{proposition}
\label{prop:icbf_forward_invariance}

Let $\nabla^2_{\bxi} J(\bx) : \mathbb{R}^n \times \mathbb{R}^{p} \rightarrow \mathcal{S}^n$ be the Hessian of the nonlinear least squares cost function \cref{eq:nl} and a safe set defined as \cref{eq:ICBF_safe_set}.
If $\boldsymbol{H}= \nabla^2_{\bxi} J(\bx)+\lambda_s\bI$ is an OD-HOMCBF matrix from \cref{def:OD-HOMCBF}, then we call $\bH$ a \textit{matrix information control barrier function} (MI-CBF) and:
\begin{enumerate}
    \item The set $\Sc$ is control invariant.
    \item The CBF-SDP safety filter from \cref{eq:OD-HOMCBF-SDP} associated with $\bH$ is continuous on a neighborhood of $\Sc$ and renders $\Sc$ forward invariant.
\end{enumerate}
\label{prop:MICBF}
\end{proposition}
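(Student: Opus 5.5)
The plan is to reduce both claims to Theorem~\ref{thm:OD-HOMCBF}, applied with the specific matrix function $\bH(\bx)=\nabla^2_{\bxi}J(\bx)+\lambda_s\bI$. First I will check the standing hypotheses of that theorem. Assuming $\mathfrak{m}$ and $\Sigma$ are smooth (at least $C^{r+2}$ suffices), $\bH$ is smooth enough that the recursion \eqref{eq:HOMCBF} is well defined. Each $\bPsi_i$ is then continuously differentiable, since it involves at most $r-1$ further Lie derivatives of $\bH$. The sets $\Cc_i$ are closed, being preimages of the closed cone of positive semidefinite matrices under continuous maps. Here I read $\Sc$ in the statement as the verified safe set $\cap_{i\in\until r}\Cc_i\subseteq \Cc_1=\{\bx\mid \bH(\bx)\succeq\bzero\}$, which is what the OD-HOMCBF construction certifies. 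When $r=1$ this set coincides exactly with \eqref{eq:ICBF_safe_set}.

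For item 2, I use the OD-HOMCBF hypothesis. For every $\bx\in\cap_i\Cc_i$ it gives a pair $(\bu,\omega)$ satisfying \eqref{eq:OD-HOMCBF_cond_strict} strictly. Strictness must also hold for the SDP \eqref{eq:OD-HOMCBF-SDP}, which carries the extra constraint $\omega\ge\theta_d$. The feasible $\omega$ can be enlarged: if $\alpha_r(\bPsi_r)\succeq\bzero$, increasing $\omega$ keeps the inequality, and otherwise I would argue as in the proof of the preceding theorem, splitting into eigenvalue cases. This yields a Slater point of the SDP at each $\bx$. Since all data are continuous in $\bx$, strict feasibility persists on an open neighborhood $\Ec$ of $\cap_i\Cc_i$. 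The strictly convex objective in $(\bu,\omega)$, with $p>0$, together with Slater's condition then gives a unique minimizer depending continuously on $\bx$, by the same argument invoked from \cite[Thm. 1]{ong2025matrix} in the proof of Theorem~\ref{thm:HOMCBF}. Setting $\bk$ and $\theta$ to the minimizer components, they satisfy the hypothesis of the first part of Theorem~\ref{thm:OD-HOMCBF} on $\Ec$, so the closed loop renders the set forward invariant.

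Item 1 then follows directly. For each initial condition in the set, the continuous feedback of item 2 generates a closed-loop trajectory that stays in the set, so the open-loop signal $t\mapsto\bk(\bx(t))$ witnesses control invariance. Containment in $\Sc$ holds because $\cap_i\Cc_i\subseteq\Cc_1$.

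I expect the main obstacle to be the strict-feasibility step with $\omega\ge\theta_d$. This includes handling eigenvalues of $\bPsi_r$ that are negative or zero, where the sign of $\balpha_r(\bPsi_r)$ may not let $\omega$ be increased freely. I would first try to show that on $\cap_i\Cc_i$ we have $\bPsi_r\succeq\bzero$, hence $\balpha_r(\bPsi_r)\succeq\bzero$, so any larger $\omega$ preserves strict feasibility. I would then use continuity to extend this to the neighborhood.
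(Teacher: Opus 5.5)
Your proposal is correct and follows the only route the paper relies on: the paper gives no separate proof and treats the proposition as a direct corollary of Theorem~\ref{thm:OD-HOMCBF}. Your Slater point $(\bu,\max\{\omega,\theta_d\}+1)$ fills in the strict-feasibility step under $\omega\ge\theta_d$ that the paper leaves unstated, and it works because $\bPsi_r\succeq\bzero$ (hence $\balpha_r(\bPsi_r)\succeq\bzero$) holds by the definition of $\Cc_r$, so the obstacle you anticipated is in fact immediate. Your reading of $\Sc$ as $\cap_{i\in\until r}\Cc_i$ is not just convenient but forced when $r\ge 2$: for the double integrator of the application, a state on $\partial\Sc$ whose velocity makes $\lambda_{\min}(\bH)$ strictly decrease at $t=0$ leaves $\Sc=\Cc_1$ under every input, so item~1 cannot hold for $\Sc$ taken literally.
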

This MI-CBF formulation has some key advantages over the scalar version proposed in \cite{gessow2024information}.
In the scalar method, there is no guarantee that the minimum eigenvalue is smooth, so additional assumptions and machinery are required to ensure that the safe set could be rendered forward invariant with a CBF.
Specifically it required smoothing to ensure that the Hessian was an analytic function of one variable.
This forced the safety filter to have a closed form solution to make sure the control was sufficiently smooth.
Alternatively, one could impose additional CBFs to ensure that the eigenvalues all remained unique, however, this approach added more parameters to tune as well as affected the resulting trajectory and increased computation.
In contrast, with a matrix formulation, a simple SDP-based safety filter can be used.
\subsection{Results}
\label{sec:results}
We now demonstrate the method on the two examples from \cite{gessow2024information} to show how the MI-CBF also ensures safety while being easier and more intuitive to implement, as it no longer requires explicit expressions for the eigenvalue derivatives.
The double integrator system has matrix relative degree two, as the Hessian only contains positions and not velocity, making its first matrix Lie derivatives identically zero.
To enforce the desired safe set we used an OD-HOMCBF, and at each time step, the system state is computed through solving a nonlinear least squares problem via gradient descent, {with either} range-only {or} bearing-only measurements.
The estimated state and the Hessian of the nonlinear least squares cost are then used by the safety controller to adjust an LQR controller that drives the system to a desired location.
Due to the advantages of an MCBF-based approach, the safety filter can be calculated via an SDP at every step without requiring explicit formulas for the eigenvalue derivatives. Instead, only matrix Lie derivatives of the Hessian needs to be computed. This has the benefit of reducing the number of parameters to tune. 
The measurement model $\mathfrak{m}(x)$ was used to predict the rate of change of the measurements. 
The position of the system is $(p_x,\,p_y)$ with beacons located at $({b^k_{x}},\,b^k_{y})$ for $k=1,2,3$.

\textit{Range-Only Measurements.}
The measurement model {for range-only beacons} is $\mathfrak{m}_k(p_x ,p_y )=\sqrt{(p_x-b^k_{x})^2+(p_y-b^k_{y})^2}$ for $k=1,2,3$.
The covariance matrix $\Sigma(p_x,p_y)$ for each measurement was chosen to be $\Sigma(p_x,p_y)_{k,k}={1+\exp({m_k(p_x,p_y)-10})}$ to capture measurement dropout due to distance from the $k^{\text{th}}$ beacon. 
The off-diagonal entries were set to zero.
In \cref{fig:distance_trajectories}, gray scale denotes the value of $\lambda_{\text{min}}$ with the red line at $\lambda_s=5$ to visualize the safe region.
\begin{figure}[t!]
\centering
  \subfloat[Trajectory,]{\includegraphics[trim=20 0 10 0, clip, width=.44\columnwidth]{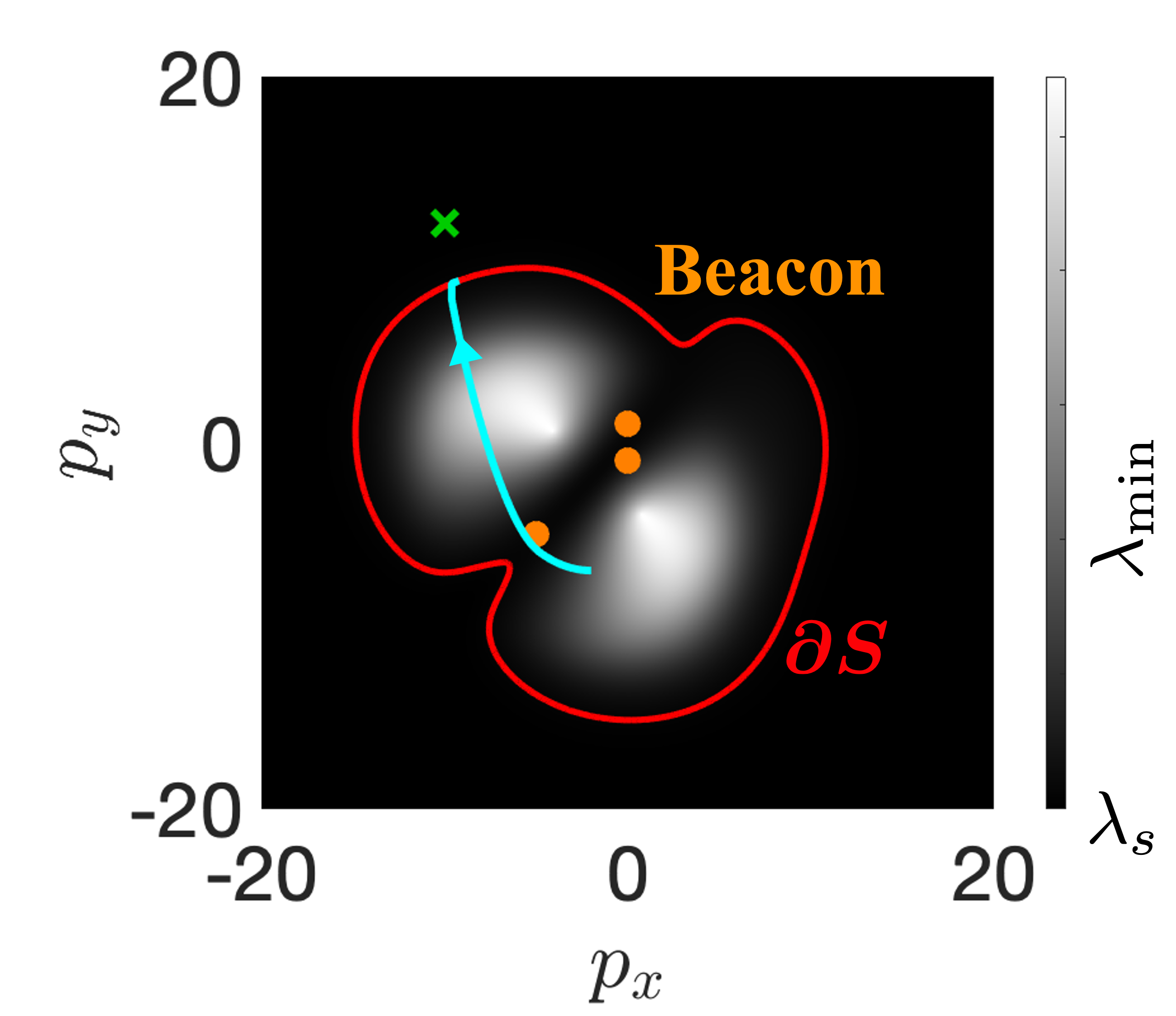}\label{fig:distance_trajectories}}
  \hspace{0.2em}
  \subfloat[Minimum eigenvalue of barrier.]{\includegraphics[trim=5 0 40 0, clip, width=.49\columnwidth]{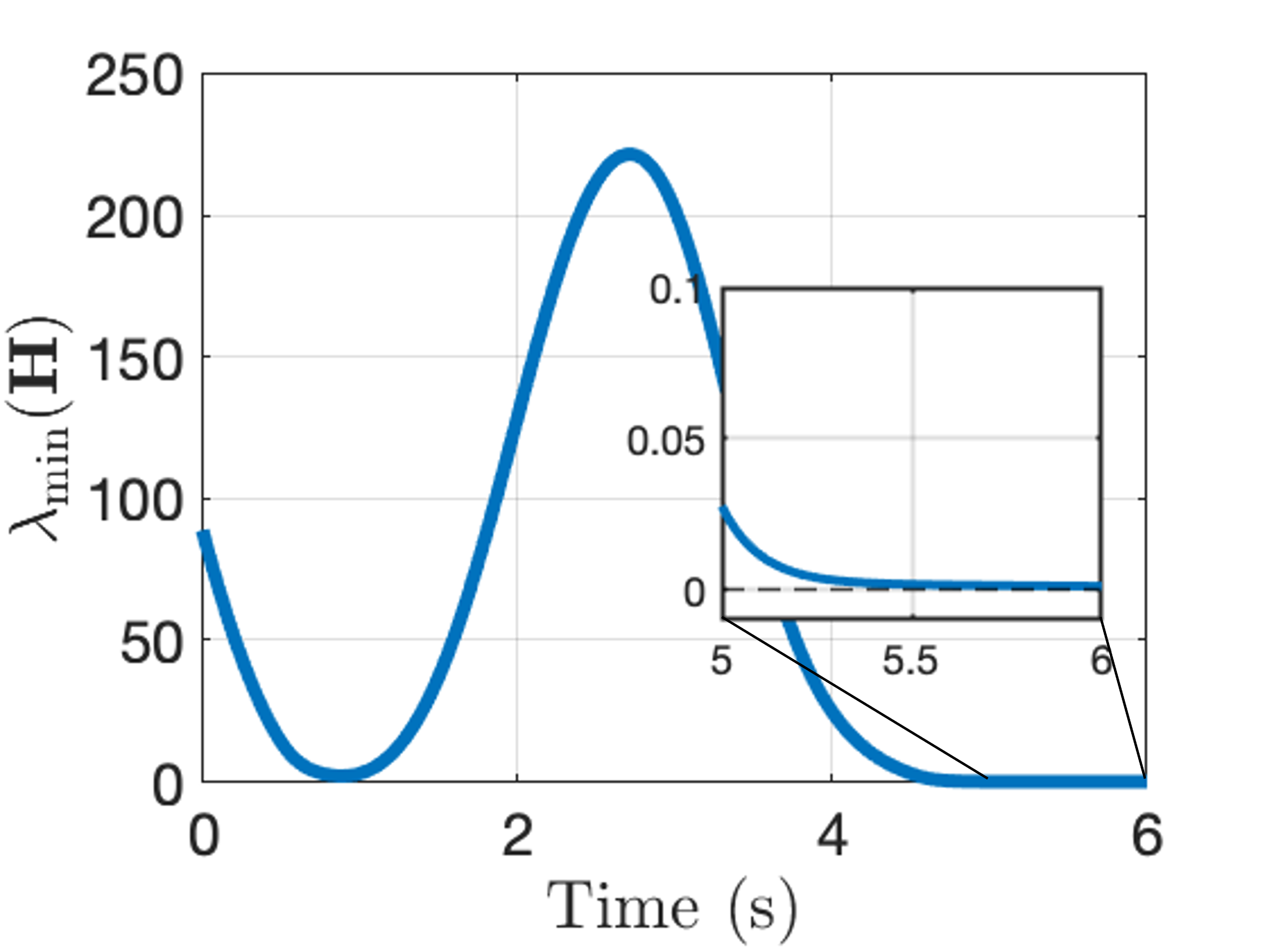}\label{fig:distance_barrier}}
  \caption{(a): Trajectory using range-only measurements; the safe region is indicated as inside the red line and the desired state with the green $\times$.  \\
  (b): The barrier value of $\lambda_{\min} (\bH)$}
    \label{fig:distance_results}
    \vskip -0.1in
\end{figure}

\textit{Bearing-Only Measurements.}
Next, we demonstrate the method using heading-only measurements. 
The measurement model is
$\mathfrak{m}_k(p_x ,p_y )=\tan^{-1}\left((p_y-b^k_y)/(p_x-b^k_x)\right)$ for $k=1,2,3$ and the weighting matrix is $\Sigma(p_x,p_y)=I$ since the eigenvalues naturally decayed as the distance from the beacons increased, thereby eliminating the need to model dropout separately.
The safe region, where $\lambda_s\geq0.015$, is shown in \cref{fig:angle_trajectory} by the red line.
\begin{figure}[t!]
\centering
  \subfloat[Trajectory,]{\includegraphics[trim=20 0 10 0, clip, width=.44\columnwidth]{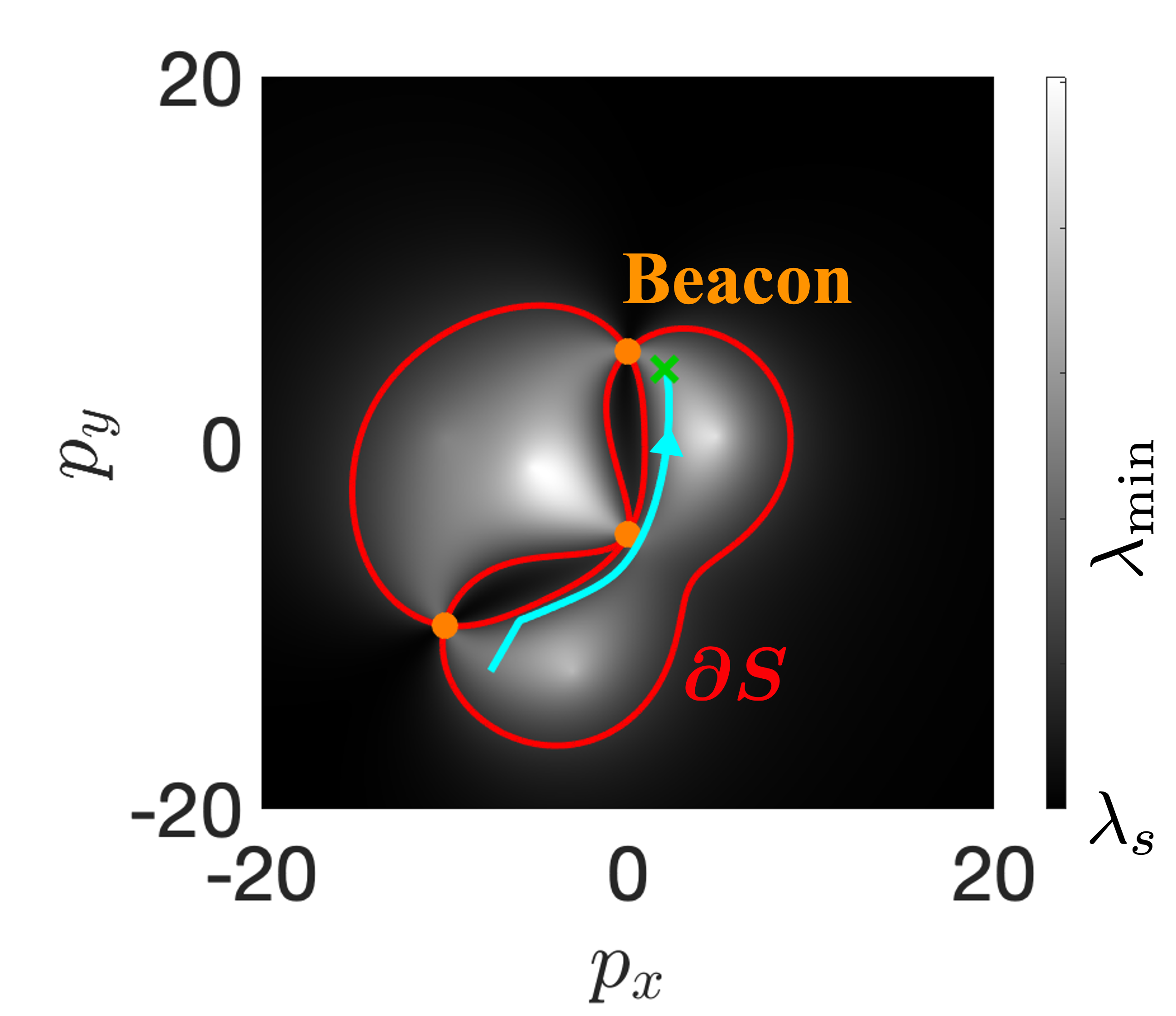}\label{fig:angle_trajectory}}
  \hspace{0.2em}
  \subfloat[Minimum eigenvalue of barrier.]{\includegraphics[trim=5 0 40 0, clip, width=.49\columnwidth]{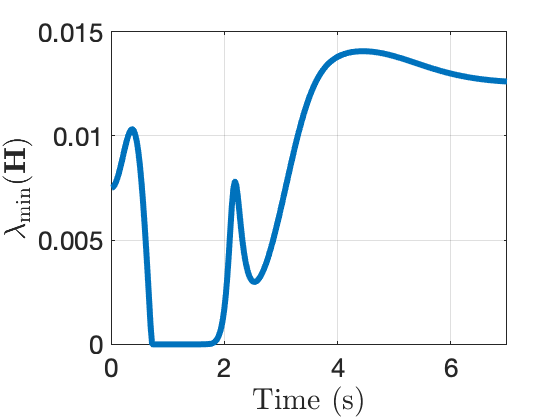}\label{fig:angle_barrier}}
  \caption{(a): Trajectory using heading-only measurements; the safe region is indicated as inside the red line and the desired state with the green $\times$.  \\
  (b): The barrier value of $\lambda_{\min} (\bH)$.}
    \label{fig:angle_results}
    \vskip -0.25in
\end{figure}
\section{Conclusion}
In this work we extended MCBFs to high-order systems by developing HOMCBFs and a corresponding notion of matrix relative degree.
A key challenge is that well-posedness and feasibility of matrix-valued safe sets require more care than in the scalar case.
We showed that these properties can be characterized through the ability to influence the minimum eigenspace, developed a tractable condition for matrix relative degree, and established that the OD-HOMCBF formulation guarantees forward invariance while only requiring fixed matrix relative degree on the boundary.
We demonstrated the framework on the problem of ensuring localization safety.
Future work includes investigating mixed relative degree MCBFs, as well as additional applications for HOMCBFs.
\bibliographystyle{IEEEtran}

\bibliography{references}
\end{document}